\newtheorem{theorem}{Theorem}[section]
\newtheorem{lemma}[theorem]{Lemma}
\newtheorem{definition}[theorem]{Definition}
\DeclareMathOperator{\e}{e}
\journal{ }
\begin{document}

\begin{frontmatter}
\title{Discontinuous harvesting policy in a Filippov system involving prey refuge}

\author[label1]{Rajesh Ranjan Patra\corref{cor1}}\ead{rajeshp911@yahoo.com}
\author[label1]{Sarit Maitra}

\address[label1]{Department of Mathematics, NIT Durgapur, Durgapur-713209, India}
\cortext[cor1]{Corresponding author}

\begin{abstract}
In this article, a non-smooth predator-prey dynamical system is considered. Here, we discuss about sustainable harvesting in a Filippov predator-prey system, which can produce yield and at the same time prevent over-exploitation of bioresources. The local and global stability analysis of the two subsystems, with and without harvesting, are studied. Furthermore, for the Filippov system, we have performed bifurcation analysis for several key parameters like predation rate, threshold quantity and prey refuge.  Some local sliding bifurcations are also observed for the system. Numerical simulations are presented to illustrate the dynamical behaviour of the system.
\begin{keyword}
Filippov System \sep Discontinuous Harvesting \sep Prey Refuge \sep Sliding Mode \sep Boundary Node Bifurcation 
\end{keyword}

\end{abstract}
\end{frontmatter}

\section{Introduction}
Non-smoothness is an integral part of mathematical modelling related to different physical systems and engineering applications, with non-smooth dynamical systems finding applications across diverse fields \cite{giannakopoulos2002, bernardo2008, biak2013}. These systems exhibit a diverse range of complex phenomena that go beyond the scope of classical theory. Filippov systems are a sub-class of non-smooth systems which consist of different smooth systems defined for various open regions and the regions are separated by smooth boundaries \cite{bernardo2008,filippov1988}. These are valuable in ecological modelling as they effectively capture essential ecological phenomena such as threshold effects, abrupt transitions, and discontinuous behavior in species interactions. 

Harvesting of species has always been a topic of significant importance due to its ecological and economic perspectives \cite{clark1985,khamis2011}. There are various studies on constant harvesting \cite{wang2006,luo2017}, continuous linear harvesting \cite{zhou2013,meng2021} and many non-linear harvesting methods \cite{gupta2013,dubey2014,majumdar2022} that use continuous functions to measure the proportion of population to be exploited. But many authors have confirmed in the past that, over-exploitation can result in population extinction \cite{lv2013,zhao2015}. For example, continuous harvesting of herbivorous fish together with sustained predation pressure by lionfish leads to a drop in herbivorous fish population \cite{lv2013}, 
resulting in a significant ecological imbalance in coral reefs. Hence, researchers are actively focusing on the challenges related to the sustainable harvesting of renewable bio-resources, which can help yield resources and limit the level of exploitation. One way to achieve this is by implementing a discontinuous harvesting policy which can be represented by a Filippov system.

In ecology, Filippov systems are often used for models involving epidemic models \cite{wang2014,wang2018}, integrated pest management \cite{tan2016,zhou2022}, and discontinuous harvesting \cite{
bondarev2022,luo2023}, etc. The threshold parameter plays a crucial role in determining when harvesting should occur based on the density of one or more populations. Such a level can be set to maximize the total yield from the resource or minimize the impact of harvesting on the ecosystem.

The investigation of how the hiding behavior of prey influences the dynamics of predator-prey interactions is widely recognized as a significant topic in applied mathematics and theoretical ecology \cite{holling1959,hassell1974,hassell1978}. Previous research on the topic indicates that prey refuges play a critical role in stabilizing predator-prey interactions \cite{ko2006,ji2010}. In the studies \cite{kar2005} and \cite{chen2010},  the authors showed that refuges can prevent prey extinction and influence population outbreaks. Another observation by Mondal and Samanta \cite{mondal2020} shows that while the refuge strengthens the prey density, it may also lead to the extinction of the predator species, even when the predator has access to supplementary food resources. Therefore, prey refuge is an essential aspect in the survival and persistence of populations.

The article is organized as follows. Section \ref{secmodel} shows the formulation of the population model while section \ref{basic_results} contains the basic results like boundedness of the solutions and existence of positive equilibria of the system. Section \ref{secsubsys1} and \ref{secsubsys2} includes the stability analysis of non-harvesting subsystem and harvesting subsystem respectively. The Filippov system is analyzed in detail in section \ref{secfilippov}. Numerical simulations involving results on sliding bifurcation are presented in section \ref{secnum}. Finally, the summary of the work is discussed in section \ref{seccon}.

\section{The Model}\label{secmodel}
In this section, we layout a mathematical model involving a predator and a prey population, with both populations are subjected to harvesting. It is assumed that both populations live in a closed habitat with limited reources and hence their intrinsic growth terms are modelled by logistic growth functions. The predator is assumed to be a generalist, so it has access to alternate food sources present in the environment. The predation term is a Holling type II function which includes the handling time for the predators. Also, it is considered that an $m-$fraction of the prey population is spared from predation. This forms prey refuge. Then instead of the total prey population, only $(1-m)-$fraction of the prey population is exposed to predation. The dynamics of the generalist species is modelled following the representation by Guin et al. \cite{guin2012} and Sen et al. \cite{sen2020}. Let $x(t)$ and $y(t)$ represent the density of prey and predator population at any instant of time $t$, then combining all these factors, the proposed mathematical model is given by
\begin{eqnarray}\label{sys}
\frac{dx}{dt} &=& r_1x\left(1-\frac{x}{k_1}\right)-\frac{p(1-m)xy}{1+b(1-m)x}-\psi q_1Ex,\nonumber\\
\frac{dy}{dt} &=& r_2y\left(1-\frac{y}{k_2}\right)+\frac{ep(1-m)xy}{1+b(1-m)x}-\psi q_2Ey,
\end{eqnarray}
where the parameters used in \eqref{sys} are given in Table \ref{paramter_table}. The value of the dimensionless quantity $\psi$ switches according to the abundance of the prey species and is given by
\begin{equation}\label{psi}
\psi = \begin{cases}
0, & x <S,\\
1, & x >S.
\end{cases}
\end{equation}
Here, $S$ is the critical prey population density level that regulates the harvesting of the populations. Certainly, harvesting continues for $x>S$ given that $\psi=1$. Similarly, harvesting does not occur when $x<S$ since $\psi=0$. This ensures that harvesting is prohibited when the prey population density falls below the critical value $S$. The predator has multiple food sources, with the prey being one of them. By imposing the condition of harvesting based on prey density, as defined in \eqref{psi}, the prey population is encouraged to maintain a healthy level, which can indirectly ensure maintaining at least some population of predators by ensuring food for them at a certain level. So we can assume that by maintaining a threshold for prey population, we can indirectly maintain a predator population threshold. 
Thus, harvesting is presumed to depend solely on prey density.\\
System \eqref{sys} can be rewritten as
\begin{table}
\centering
\caption{Parameters and their biological meaning}
\begin{tabular}{||c|c|c||}
\hline\hline
Parameter & Biological meaning & Unit\\
\hline
\hline
$r_1$ & intrinsic growth rate of the prey population & [time]$^{-1}$\\
$r_2$ & intrinsic growth rate of the predator population & [time]$^{-1}$\\
$k_1$ & the carrying capacity for the prey & [density]\\
$k_2$ & the carrying capacity for the predator & [density]\\
$p$ & the maximum predation rate & [density]$^{-1}$[time]$^{-1}$\\
$m$ & represents the fraction of prey that form the refuge; $m\in (0,1)$ & dimensionless\\
$b$ & involves the handling time of the predators & [density]$^{-1}$\\
$q_1$ & the catchability coefficient for the prey & [time]$^{-2}$)\\
$q_2$ & the catchability coefficient for the predator & [time]$^{-2}$)\\
$E$ & the harvesting effort & [time]\\
$e$ & converison rate of prey population to biomass & dimensionless\\
\hline\hline
\end{tabular}
\label{paramter_table}
\end{table}
\begin{equation}
\dot{z}(t) = \begin{cases}
\frac{dz(t)}{dt}=F_{S_1}(z), & z\in S_1,\\
\frac{dz(t)}{dt}=F_{S_2}(z), & z\in S_2,
\end{cases}
\end{equation}
for $z(t)=(x(t),y(t))^T$, where
\begin{eqnarray*}
F_{S_1}(z) &=& \left(r_1x\left(1-\frac{x}{k_1}\right)-\frac{p(1-m)xy}{1+b(1-m)x}, r_2y\left(1-\frac{y}{k_2}\right)+\frac{ep(1-m)xy}{1+b(1-m)x} \right)^T,\\
F_{S_2}(z) &=& \left(r_1x\left(1-\frac{x}{k_1}\right)-\frac{p(1-m)xy}{1+b(1-m)x}-q_1Ex, r_2y\left(1-\frac{y}{k_2}\right)+\frac{ep(1-m)xy}{1+b(1-m)x}-q_2Ey \right)^T,
\end{eqnarray*}
and 
\begin{equation}\label{s1s2}
S_1 = \left\{z\in\mathbb{R}_2^+: P(z)<0\right\},\quad \& \quad S_2=\left\{z\in\mathbb{R}_2^+: P(z)>0\right\},
\end{equation}
where
\begin{equation}\label{pz}
P(z) = \alpha^Tz-S,
\end{equation}
for $\alpha=(1,0)^T$.
Let $\Sigma$ be the $(n-1)-$dimensional switching manifold separating the two regions $S_1$ \& $S_2$, where $n$ denotes the number of state variables in the system, then
\begin{equation*}
\Sigma = \left\{z\in \mathbb{R}_2^+: P(z)=0\right\}.
\end{equation*}
Following Filippov \cite{filippov1988}, the solution of system \eqref{sys} can be defined as the solutions of the differential inclusion
\begin{equation*}
\frac{dz(t)}{dt}\in F(z)=F_{S_1}+\psi (F_{S_2}-F_{S_1}),
\end{equation*}
where $\psi=0$ when $P(z)<0$, $\psi=1$ when $P(z)>0$, and $\psi\in (0,1)$ when $P(z)=0$.
\section{Basic results: boundedness, existence \& stability equilibria, and subsystems}\label{basic_results}
In this section, we study the boundedness of the solutions of system \eqref{sys}, and the existence of interior equilibria of the vector fields $F_{S_1}$ and $F_{S_2}$. The harvesting and non-harvesting subsystems of the Filippov system \eqref{sys} are also examined, and  the sufficient conditions for the existence and stability of interior equilibria are obtained. We state the following lemma that will help us to find the upper bound of the solutions of the given system.
\begin{lemma}[\cite{aziz2002}]\label{lemma_bound}
Let $\eta(t)$ be an absolute continuous function with real numbers $k_1$ and $k_2$ with $k_1 \neq 0$ s.t. $d\eta/dt+k_1 \eta(t) \le k_2,~ \forall t\ge 0$, then for all $t\ge \tilde{T} \ge 0$, $$\eta(t) \le \frac{k_2}{k_1}-\left(\frac{k_2}{k_1}-\eta(\tilde{T}) \right) \e^{-k_1 (t-\tilde{T})}.$$
\end{lemma}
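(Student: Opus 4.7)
The plan is to use the standard integrating-factor trick for linear first-order inequalities, adapted to the absolutely continuous setting. First I would rewrite the hypothesis as $\eta'(t) + k_1\eta(t) \le k_2$ holding for almost every $t \ge 0$ (which is all we get from absolute continuity). Multiplying both sides by the integrating factor $\e^{k_1 t}$, which is smooth and positive, preserves the inequality and yields
\[
\frac{d}{dt}\bigl(\e^{k_1 t}\eta(t)\bigr) \;\le\; k_2\,\e^{k_1 t} \quad \text{a.e.\ } t\ge 0,
\]
where I have used that the product $\e^{k_1 t}\eta(t)$ is itself absolutely continuous, so the product rule is valid almost everywhere.

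Next I would integrate this inequality from $\tilde{T}$ to $t$, invoking the fundamental theorem of calculus for absolutely continuous functions. This gives
\[
\e^{k_1 t}\eta(t) - \e^{k_1\tilde{T}}\eta(\tilde{T}) \;\le\; \frac{k_2}{k_1}\bigl(\e^{k_1 t}-\e^{k_1\tilde{T}}\bigr),
\]
where the assumption $k_1 \neq 0$ is used to carry out the integration on the right-hand side. Rearranging and multiplying through by $\e^{-k_1 t}$ produces
\[
\eta(t) \;\le\; \frac{k_2}{k_1} - \left(\frac{k_2}{k_1} - \eta(\tilde{T})\right)\e^{-k_1(t-\tilde{T})},
\]
which is exactly the claimed bound.

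The computation is essentially routine, so there is no real obstacle beyond bookkeeping. The one point requiring a little care is the justification of the product and integration steps under the absolute-continuity assumption rather than $C^1$: this follows because absolute continuity is preserved under multiplication by smooth functions, and the fundamental theorem of calculus $\int_{\tilde{T}}^{t} f'(s)\,ds = f(t)-f(\tilde{T})$ holds for every absolutely continuous $f$. Note that the lemma is stated without a sign assumption on $k_1$; the same derivation works for $k_1<0$ since only $k_1\neq 0$ is used to divide, though in that case the bound is of comparatively little use as $\e^{-k_1(t-\tilde{T})}$ grows. In the subsequent boundedness arguments of Section~\ref{basic_results}, this lemma will be applied with $k_1>0$ so that $\eta(t)$ is asymptotically bounded above by $k_2/k_1$.
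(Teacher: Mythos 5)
Your proof is correct. The paper itself gives no argument for this lemma---it is simply quoted from the cited reference \cite{aziz2002}---so there is nothing to compare against except the standard derivation, which is exactly what you supply: the integrating-factor computation, the fundamental theorem of calculus for absolutely continuous functions, and the observation that only $k_1\neq 0$ (not $k_1>0$) is needed, with the sign of $k_1$ mattering only for the usefulness of the bound. Your remarks on where absolute continuity is actually used are accurate and complete the argument at the level of rigor one would want.
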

\subsection{Boundedness}
\noindent From the first equation of \eqref{sys}, we have
$x'\leq r_1x\left(1-x/k_1\right)$.
Hence, we deduce
\begin{equation*}
x(t)\leq k_1
\end{equation*}
for some $T_x\geq t\geq 0$, using 
Lemma \ref{lemma_bound}. Now, for the upper bound of $y(t)$, we consider different cases according the value of $\psi$. 
Let $\psi=0$. Then the second equation of \eqref{sys} becomes $y'\leq r_2y(1-y/k_2)+(ep(1-m)k_1y)/(1+b(1-m)k_1)$
which gives
\begin{equation*}
y(t)\leq \frac{k_2}{r_2}\left(r_2+\frac{ep(1-m)k_1}{1+b(1-m)k_1}\right), 
\end{equation*}
for some $T_{y_1}\geq t\geq 0$. For $\psi=1$, let $\eta(t)$ be a continuous function such that $\eta(t)=ex(t)+y(t)$, then we have
$\eta'(t)\leq -\gamma\eta+(r_1k_1e+r_2k_2)/4$,
which along with Lemma \ref{lemma_bound} implies
\begin{equation*}
y(t)\leq \eta(t)\leq \frac{1}{4\gamma}(r_1k_1e+r_2k_2),
\end{equation*}
for some $T_{y_2}\geq t\geq 0$, where $\gamma=\min\{E_1e,E_2\}$. Hence, we have 
\begin{equation*}
y(t)\leq \max\left\{ \frac{k_2}{r_2}\left(r_2+\frac{ep(1-m)k_1}{1+b(1-m)k_1}\right),\frac{1}{4\gamma}(r_1k_1e+r_2k_2) \right\},
\end{equation*}
for $T_y\geq t\geq 0$, where $T_y=\max\{T_{y_1},T_{y_2}\}$.
\subsection{Existence of interior equilibrium point}
In this section, we investigate the equilibria of system \eqref{sys}, which can be classified into three types: trivial, semi-trivial, and interior equilibria. The phase plane of the system features two distinct vector fields, denoted by $F_{S_1}$ and $F_{S_2}$, that are separated by the switching manifold $x=S$. The equilibria that arise when $\psi=0$ correspond to those of $F_{S_1}$, while the equilibria that arise when $\psi=1$ correspond to those of $F_{S_2}$.

The system given by \eqref{sys} has a trivial equilibrium at the origin and may have up to two semi-trivial equilibria, which depend on the parameter $\psi$ (0 or 1) and require that the non-zero coordinates have a positive sign. These semi-trivial equilibria are located at $(k_1(r_1-\psi q_1E)/r_1,0)$ for $r_1>\psi q_1E$ and $(0,k_2(r_2-\psi q_2E)/r_2)$ for $r_2>\psi q_2E$.
\begin{theorem}\label{thm_eq_exist}
A unique positive interior equilibrium point exists in either of the systems $\dot{z}=F_{S_1}$ (for $\psi=0$) or $\dot{z}=F_{S_2}$ (for $\psi=1$), provided that the condition $r_1>\psi q_1E+r_1x^*/k_1$ is met by the respective system, along with any one of the following conditions:
$$\text{(i) }\alpha_0<0, \alpha_1<0\;\& \;\alpha_2<0;\quad\text{(ii) } \alpha_0<0, \alpha_1<0\;\& \;\alpha_2>0;\quad\text{(iii) } \alpha_0<0, \alpha_1>0\;\& \;\alpha_2>0;$$
where $\alpha_i$'s are given in the proof.
\end{theorem}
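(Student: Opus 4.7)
The plan is to collapse both equilibrium conditions of either $F_{S_1}$ or $F_{S_2}$ into a single univariate cubic in the prey component and then apply Descartes' rule of signs. For an interior equilibrium $(x^*,y^*)$ with $x^*,y^*>0$, I would first divide the two scalar equations $F_{S_j}(x^*,y^*)=0$ by $x^*$ and $y^*$ respectively to obtain
\[
r_1\Bigl(1-\tfrac{x^*}{k_1}\Bigr)-\tfrac{p(1-m)y^*}{1+b(1-m)x^*}-\psi q_1E=0,\qquad
r_2\Bigl(1-\tfrac{y^*}{k_2}\Bigr)+\tfrac{ep(1-m)x^*}{1+b(1-m)x^*}-\psi q_2E=0.
\]
Solving the first relation for $y^*$ yields
\[
y^*=\frac{\bigl(r_1(1-x^*/k_1)-\psi q_1E\bigr)\bigl(1+b(1-m)x^*\bigr)}{p(1-m)},
\]
so positivity of $y^*$ is controlled exactly by the hypothesis $r_1>\psi q_1E+r_1x^*/k_1$ stated in the theorem.

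Substituting this expression for $y^*$ into the second equilibrium relation and clearing the denominator $1+b(1-m)x^*$, I get on one side a scalar multiple of $(1+b(1-m)x^*)^2\bigl(r_1(1-x^*/k_1)-\psi q_1E\bigr)$ and on the other a linear polynomial in $x^*$. Expanding and collecting powers of $x^*$ produces a cubic
\[
\alpha_3(x^*)^3+\alpha_2(x^*)^2+\alpha_1 x^*+\alpha_0=0,
\]
whose coefficients can be read off explicitly from the parameters. The $(x^*)^3$-coefficient arises only from the product of the $b^2(1-m)^2(x^*)^2$ term in the squared bracket with the $-r_1x^*/k_1$ term in the linear factor, so $\alpha_3$ is a fixed negative multiple of $r_1r_2b^2(1-m)^2/k_1$. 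Multiplying the cubic by $-1$ if necessary, I may assume $\alpha_3>0$, leaving only the signs of $\alpha_0,\alpha_1,\alpha_2$ to be specified; these are precisely the quantities referenced in the three cases of the theorem.

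With $\alpha_3>0$, cases (i), (ii), (iii) correspond to coefficient sign patterns $(+,-,-,-)$, $(+,+,-,-)$, and $(+,+,+,-)$ for $(\alpha_3,\alpha_2,\alpha_1,\alpha_0)$, each exhibiting exactly one sign change. By Descartes' rule of signs, the cubic then has precisely one positive real root $x^*$. Combined with the equivalence $y^*>0\iff r_1>\psi q_1E+r_1x^*/k_1$, this gives a unique positive interior equilibrium in each case. The main obstacle I anticipate is bookkeeping rather than conceptual difficulty: expanding the cubic cleanly so that $\alpha_0,\alpha_1,\alpha_2$ have presentable closed forms usable by the later bifurcation analysis, and checking that each of the three sign scenarios is actually realized by some admissible parameter choice so that the three cases are not vacuous.
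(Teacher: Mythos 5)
Your proposal follows essentially the same route as the paper: eliminate $y$ via the prey nullcline, substitute into the predator equation to obtain the cubic $\alpha_3x^3+\alpha_2x^2+\alpha_1x+\alpha_0=0$, apply Descartes' rule of signs (each of cases (i)--(iii) giving exactly one sign change, hence a unique positive root $x^*$), and recover $y^*>0$ from the hypothesis $r_1>\psi q_1E+r_1x^*/k_1$. The only discrepancy is cosmetic: with the paper's arrangement the leading coefficient is already $\alpha_3=r_1r_2b^2(1-m)^2>0$, so your normalization by $-1$ is unnecessary but harmless.
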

\begin{proof}
\noindent An interior equilibrium point of system \eqref{sys} will simultaneously satisfy
\begin{eqnarray}\label{sys_eq}
r_1\left(1-\frac{x}{k_1}\right)-\frac{p(1-m)y}{1+b(1-m)x}&=&\psi q_1E,\nonumber\\
r_2\left(1-\frac{y}{k_2}\right)+\frac{ep(1-m)x}{1+b(1-m)x}&=&\psi q_2E.
\end{eqnarray}
Eliminating $y$ from the first equation, we have
\begin{equation}\label{y_eq}
y = \frac{1}{p(1-m)}(r_1-\psi q_1E-r_1x/k_1)(1+b(1-m)x).
\end{equation}
Now, replacing the value of $y$ in the first equation of \eqref{sys_eq}, it reduces to
\begin{equation}\label{xroot}
\alpha_3x^3+\alpha_2x^2+\alpha_1x+\alpha_0=0,
\end{equation}
where
\begin{eqnarray*}
\alpha_0&=&k_1k_2p(1-m)(r_2-\psi q_2E)-k_1r_2(r_1-\psi q_1E),\\
\alpha_1&=&r_1r_2-2k_1r_2(r_1-\psi q_1E)b(1-m)+k_1k_2p(1-m)^2{[b(r_2-\psi q_2E)+ep]},\\
\alpha_2&=&2r_1r_2b(1-m)-k_1r_2b^2(1-m)^2(r_1-\psi q_1E),\\
\alpha_3&=&r_1r_2b^2(1-m)^2>0.
\end{eqnarray*}
Using Descarte's rule of sign in \eqref{xroot}, a unique positive root, $x^*$, exists if $\alpha_i$'s, for $i=0,1,2,3$, satisfy any of the following combinations:
$$\text{(i) }\alpha_0<0, \alpha_1<0\;\& \;\alpha_2<0;\quad\text{(ii) } \alpha_0<0, \alpha_1<0\;\& \;\alpha_2>0;\quad\text{(iii) } \alpha_0<0, \alpha_1>0\;\& \;\alpha_2>0.$$
Thus another positive quantity $y^*$ is obtained by putting $x=x^*$ in \eqref{y_eq}, provided $r_1>\psi q_1E+r_1x^*/k_1$. Therefore, $(x^*,y^*)$ becomes a unique interior equilibrium point.
\end{proof}
The dynamical analysis of the Filippov system in \eqref{sys} requires an investigation of its two subsystems. Consequently, we proceed to investigate the dynamics of the corresponding subsystems.
\subsection{Analysis of subsystem-I}\label{secsubsys1}
In this subsection, we perform a brief analysis of a subsystem of the Filippov system \eqref{sys} when no harvesting policy is imposed on the predator and the prey. The subsystem is given  by the differential equation
\begin{equation}\label{subsys1}
\dot{z}(t)=F_{S_1}(z),
\end{equation}
which is obtained by putting $\psi=0$ in system \eqref{sys}. Subsystem \eqref{subsys1} features four equilibria: one trivial equilibrium at $(0,0)$, two axial equilibria at $(k_1,0)$ and $(0,k_2)$, and one interior equilibrium at $(x^*,y^*)$. Our focus is on the equilibrium where both species coexist, namely $(x^*,y^*)$. The existence of this equilibrium is demonstrated in Theorem \ref{thm_eq_exist} (for $\psi=0$). Subsequently, we explore its local and global stability properties. 

Using the linearization method for local stability analysis, the Jacobian of subsystem \eqref{subsys1} evaluated at the interior equilibrium point $(x^*,y^*)$ is given by
\begin{equation*}
J(x^*,y^*)=\begin{pmatrix}
-x^*\left[\dfrac{r_1}{k_1}-\dfrac{bp(1-m)^2y^*}{(1+b(1-m)x^*)^2}\right] & -\dfrac{p(1-m)x^*}{1+b(1-m)x^*}\\
\dfrac{ep(1-m)x^*}{(1+b(1-m)x^*)^2} & -\dfrac{r_2}{k_2}y^*
\end{pmatrix}.
\end{equation*}
\begin{theorem}\label{thm_local}
The interior equilibrium of the subsystem in \eqref{subsys1} is locally asymptotically stable if $r_1/k_1>{bp(1-m)^2y^*}/{(1+b(1-m)x^*)^2}$ holds true.
\end{theorem}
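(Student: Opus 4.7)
The plan is to apply the Routh--Hurwitz criterion to the $2\times 2$ Jacobian $J(x^*,y^*)$ displayed immediately above the theorem. For a real $2\times 2$ matrix, both eigenvalues have negative real parts if and only if the trace is negative and the determinant is positive, so establishing local asymptotic stability reduces to verifying these two sign conditions under the stated hypothesis.

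To streamline the bookkeeping, I would abbreviate
\[
A := \frac{r_1}{k_1} - \frac{bp(1-m)^2 y^*}{(1+b(1-m)x^*)^2},
\]
so that the hypothesis of the theorem reads simply $A>0$. Reading off the diagonal entries of $J(x^*,y^*)$, the trace is $-x^*A - r_2 y^*/k_2$, which is strictly negative since $x^*,y^*,r_2,k_2$ are all positive and $A>0$. For the determinant, the product of the diagonal entries contributes the positive quantity $x^*A\cdot r_2 y^*/k_2$, and because the two off-diagonal entries of $J$ have opposite signs, subtracting their product adds a further strictly positive term of order $ep^2(1-m)^2 x^* y^*/(1+b(1-m)x^*)^3$. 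Both summands are positive, hence $\det J(x^*,y^*)>0$.

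With $\mathrm{tr}\,J<0$ and $\det J>0$ in hand, the Hartman--Grobman theorem delivers local asymptotic stability of $(x^*,y^*)$. The only delicate point in the whole argument is controlling the sign of the $(1,1)$ entry: without the hypothesis $A>0$ one could not conclude a negative trace, and indeed this is precisely the sign condition that would be violated in a Hopf-type instability scenario. Once the sign of the $(1,1)$ entry is secured, positivity of the determinant follows essentially automatically from the structural fact that the Holling type II response produces off-diagonal entries of opposite sign, so there is no substantive obstacle beyond the algebraic verification.
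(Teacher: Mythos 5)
Your argument is correct and is essentially the paper's own proof: both reduce the claim to showing $\mathrm{tr}\,J(x^*,y^*)<0$ and $\det J(x^*,y^*)>0$ under the hypothesis, the determinant being positive because the diagonal product is positive and the off-diagonal entries have opposite signs. You simply spell out the sign bookkeeping (and invoke hyperbolic linearization) that the paper states without detail, so there is nothing substantively different to compare.
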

\begin{proof}
If the given condition is satisfied, the Jacobian matrix will have a negative trace and a positive determinant. This guarantees that the corresponding characteristic polynomial will have two negative roots. Hence, the equilibrium point will be locally asymptotically stable.
\end{proof}
\begin{theorem}\label{thm_global}
The interior equilibrium of the subsystem in \eqref{subsys1} is globally asymptotically stable if ${r_1}/{k_1}>{bp(1-m)^2y^*}/{(1+b(1-m)x^*)}$ holds true.
\end{theorem}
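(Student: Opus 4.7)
The plan is to build a Lyapunov function of the classical logarithmic type, choose its weights so that the mixed term vanishes, and then read off the required inequality from the worst-case estimate on the Holling denominator. Specifically, I would propose
\[
V(x,y) \;=\; A\!\left[x - x^* - x^*\ln\!\frac{x}{x^*}\right] \;+\; B\!\left[y - y^* - y^*\ln\!\frac{y}{y^*}\right],
\]
with positive constants $A,B$ to be determined. Since $\xi \mapsto \xi - \xi^* - \xi^*\ln(\xi/\xi^*)$ is nonnegative and strictly convex on $(0,\infty)$ with minimum $0$ at $\xi^*$, the function $V$ is positive definite on the open first quadrant, vanishes only at $(x^*,y^*)$, and has level sets that are bounded and stay away from the axes; so the existence of positive, bounded trajectories (guaranteed by the boundedness analysis preceding the statement) makes $V$ a legitimate candidate on the invariant domain.

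The next step is to compute $\dot V$ along \eqref{subsys1} and eliminate the linear ``growth'' parts using the equilibrium identities
\[
r_1\!\left(1-\frac{x^*}{k_1}\right) \;=\; \frac{p(1-m)y^*}{1+b(1-m)x^*},
\qquad r_2\!\left(1-\frac{y^*}{k_2}\right) \;=\; -\frac{ep(1-m)x^*}{1+b(1-m)x^*}.
\]
After the standard manipulation $r_1(1-x/k_1) = r_1(1-x^*/k_1) - (r_1/k_1)(x-x^*)$, and rewriting the Holling term differences over the common denominator $D := (1+b(1-m)x)(1+b(1-m)x^*)$, the derivative takes the shape
\[
\dot V \;=\; -A(x-x^*)^2\!\left[\tfrac{r_1}{k_1}-\tfrac{bp(1-m)^2 y^*}{D}\right] - \tfrac{B r_2}{k_2}(y-y^*)^2 + \left[-\tfrac{A p(1-m)}{1+b(1-m)x} + \tfrac{B e p(1-m)}{D}\right](x-x^*)(y-y^*).
\]
Choosing $A=e$ and $B = 1+b(1-m)x^*$ makes the bracket multiplying $(x-x^*)(y-y^*)$ vanish identically in $x$, so the cross term is removed without any further hypothesis.

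What remains is the definite-sign question for
\[
\dot V \;=\; -e(x-x^*)^2\!\left[\frac{r_1}{k_1} - \frac{bp(1-m)^2 y^*}{(1+b(1-m)x)(1+b(1-m)x^*)}\right] - \frac{(1+b(1-m)x^*)\, r_2}{k_2}(y-y^*)^2.
\]
The bracket is smallest when $D$ is smallest, i.e.\ at $x=0$, where $D = 1+b(1-m)x^*$. Under the theorem's hypothesis $r_1/k_1 > bp(1-m)^2 y^*/(1+b(1-m)x^*)$, the bracket is therefore strictly positive for all $x\ge 0$. Hence $\dot V\le 0$ on the positive quadrant with equality only at $(x^*,y^*)$, and LaSalle's invariance principle (or directly Lyapunov's theorem) yields global asymptotic stability of $(x^*,y^*)$ among positive initial data.

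The only non-routine step is pinning down the weights: the predator--prey coupling produces two cross-term contributions with different denominators ($1+b(1-m)x$ versus $D$), and they cancel only after one notices that extracting the factor $1+b(1-m)x^*$ from $D$ matches exactly the coefficient $e$ coming from the conversion term. Once this bookkeeping is done, the hypothesis is precisely the worst-case positivity statement, so there is no slack in the sufficient condition and no further estimation is required.
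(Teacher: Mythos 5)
Your proposal is correct and follows essentially the same route as the paper: the same weighted logarithmic Lyapunov function with weights $e$ and $1+b(1-m)x^*$ chosen to cancel the cross term, the same resulting expression for $\dot V$, and the same worst-case bound on the Holling denominator at $x=0$, which is exactly where the hypothesis $r_1/k_1>bp(1-m)^2y^*/(1+b(1-m)x^*)$ enters. No substantive difference from the paper's argument.
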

\begin{proof}
Let us consider the positive semi-definite function
\begin{equation}\label{lyap_fun}
V(x(t),y(t))=e\left(x-x^*-x^*\; \ln{\frac{x}{x^*}}\right)+(1+b(1-m)x^*)\left(y-y^*-y^*\; \ln{\frac{y}{y^*}}\right)
\end{equation}
which is equivalent to zero only at the equilibrium $(x^*,y^*)$ and positive at any other value of $x$ \& $y$. Now, differentiating this with respect to time $t$, we have
\begin{equation*}
V'=e(x-x^*)\frac{x'}{x}+(1+b(1-m)x^*)(y-y^*)\frac{y'}{y}.
\end{equation*}
Evaluating $V'$ along the solutions of the system, we get
\begin{equation*}
V'=-e\left[\frac{r_1}{k_1}-\frac{bp(1-m)^2y^*}{[1+b(1-m)x^*][1+b(1-m)x]}\right](x-x^*)^2-\frac{r_2}{k_2}(1+b(1-m)x^*)(y-y^*)^2.
\end{equation*}
As we have $x\geq 0$, so the above expression reduces to
\begin{equation*}
V'\leq -e\left[\frac{r_1}{k_1}-\frac{bp(1-m)^2y^*}{1+b(1-m)x^*}\right](x-x^*)^2-\frac{r_2}{k_2}(1+b(1-m)x^*)(y-y^*)^2\leq 0.
\end{equation*}
Since $V(x,y)$, given in \eqref{lyap_fun}, is a continuous positive definite function with negative definite time derivative in the deleted neighbourhood of the equilibrium point $(x^*,y^*)$, it satisfies the criteria for being a Lyapunov function for the equilibrium point of subsystem \eqref{subsys2}. Therefore, the equilibrium point is globally asymptotically stable.
\end{proof}
Theorem \ref{thm_local} (or Theorem \ref{thm_global}) implies that the intrinsic growth rate of the prey must exceed $\dfrac{k_1bp(1-m)^2y^*}{(1+b(1-m)x^*)^2}$ $\left(\text{or } \dfrac{k_1bp(1-m)^2y^*}{(1+b(1-m)x^*)}\right)$ for the positive equilibrium to be locally (or globally) asymptotically stable. We can also conclude that if the prey refuge can be regulated to validate the conditions in Theorems \ref{thm_local} \& \ref{thm_global}, then the co-existent state can be made locally or globally asymptotically stable. 
\subsection{Analysis of subsystem-II}\label{secsubsys2}
Here, we investigate and present a brief analysis of the subsystem of system \eqref{sys} where both species are harvested by means of a continuous linear harvesting policy. The dynamics of the system are given by the equation:
\begin{equation}\label{subsys2}
\dot{z}(t)=F_{S_2}(z),
\end{equation}
which is obtained by setting $\psi=1$ in system \eqref{sys}. Similar to subsystem \eqref{subsys1}, subsystem \eqref{subsys2} also has four equilibria, among which one is where both species coexist and its stability properties need to be studied. 

The conditions for local and global stability mentioned in Theorem \ref{thm_local} and Theorem \ref{thm_global} also hold true for the interior equilibrium point $(x^*,y^*)$ of subsystem \eqref{subsys2}.
\section{Analysis of Filippov system \eqref{sys}}\label{secfilippov}
The analysis of solutions for a Filippov (non-smooth) system involves defining the velocities of the state variables along the switching manifold where the two vector fields intersect. Filippov suggested that a solution originating from a vector field can take two possible paths once it reaches the switching manifold ($\Sigma$): either it crosses $\Sigma$ and follows the vector field of the region it enters, or it remains on $\Sigma$. 
Solutions of system \eqref{sys} can be obtained by concatenating standard solutions of the system in $S_1$ and $S_2$ along with the sliding solutions on the manifold $\Sigma$, whose governing equation is obtained by the Filippov convex method \cite{filippov1988} or Utkin's equivalent control method \cite{utkin1992}. 
For system \eqref{sys}, $S_1$ and $S_2$ are defined in \eqref{s1s2}. Both the Filippov convex method and Utkin's equivalent control method serve as useful tools to define the velocity vector on the switching manifold. According to the Filippov convex method, the velocities of the state variables on $\Sigma$ is a convex combination of the velocities in both vector fields. Meanwhile, Utkin's control method approximates the piecewise-smooth function with an alternate continuous function.
\subsection{Preliminaries}
\begin{definition}
The hyperplane $\Sigma$ is further divided into several disjoint regions. These are:
\begin{enumerate}[(I)]
\item \textbf{Crossing region:} A region on the switching line through which the trajectories may cross from one vector filed to the another. It is represented as 
$$\Sigma_C=\left\{z\in\Sigma : \sigma_1 (z)\sigma_2 (z)>0\right\},$$ 
where $\sigma_1 (z)=\langle P_z(z),F_{S_1}(z)\rangle$ and $\sigma_2 (z)=\langle P_z(z),F_{S_2}(z)\rangle$. Here, $\langle\cdot,\cdot\rangle$ denotes the standard scalar product, $P_z(z)=\nabla P(z)$ is normal to $\Sigma$ and $\nabla$ denotes the gradient, where $P(z)$ is defined in \eqref{pz}.
\item \textbf{Sliding region:} This is the interior of the complement of the crossing region on the swtching line. Dynamics originated in this region may slide along the region or may slip into either of the vector fields. It is represented as $\Sigma_S=\left\{z\in\Sigma : \sigma_1 (z)\sigma_2 (z)<0\right\}$.
\end{enumerate}
\end{definition}
\begin{definition}
The sliding region $\Sigma_S$ can be further categorized as one of the followings:
\begin{enumerate}[(i)]
\item \textbf{Escaping sliding region:} A sliding region is called escaping when the vector fields around it are directed away from the region. This causes the dynamics on the sliding region slip into one of the vector fields. The set is represented as  $\Sigma_{ES}$ and defined as $\Sigma_{ES}=\left\{z\in\Sigma_s: \sigma_1 (z) <0\; \&\; \sigma_2 (z) >0\right\}$,
\item \textbf{Attracting sliding region:} A sliding region is called attracting when the vector fields around it are directed towards the region. This causes the dynamics on and around it converge to the pseudo-equilibrium state on the switching line. The set is defined as  $\Sigma_{AS}=\left\{z\in\Sigma_s: \sigma_1 (z) >0\; \&\; \sigma_2 (z) <0\right\}$.
\end{enumerate}
\end{definition}

\noindent According to Filippov, on the sliding mode region $\Sigma_S$,
\begin{equation}
\frac{dz(t)}{dt}=F_S(z),
\end{equation}
where $F_S(z)=\lambda F_{S_1}(z)+(1-\lambda)F_{S_2}(z)$ and $$\lambda=\frac{\langle P_z(z),F_{S_2}(z)\rangle}{\langle P_z(z),F_{S_2}(z)-F_{S_1}(z)\rangle}\in (0,1).$$
\subsection{Sliding segment \& sliding mode dynamics}
A sliding segment is present when there are regions along the discontinuity boundary $\Sigma$ where the vectors for the two subsystems of system \eqref{sys} are oriented in opposite directions. It lies on the switching manifold $\Sigma$ which is given by $P(z)=0$ or $x=S$. Clearly, $P_z(z)=(1,0)^T$. So, on $\Sigma$, we have,
\begin{equation*}
\sigma(z)=\left\{r_1S\left(1-\frac{S}{k_1}\right)-\frac{p(1-m)Sy}{1+b(1-m)S}\right\}\times\left\{r_1S\left(1-\frac{S}{k_1}\right)-\frac{p(1-m)Sy}{1+b(1-m)S}-q_1ES\right\}.
\end{equation*}
By definition, we must have $\sigma(z)<0$ on the sliding region. So the sliding region can be defined as
\begin{equation*}
\Sigma_S=\left\{z\in\mathbb{R}_+^2 : x=S\; \& \; \underaccent{\bar}{y}<y<\bar{y}\right\},
\end{equation*}
where
\begin{equation*}
\bar{y}=\frac{r_1(k_1-S)(1+b(1-m)S)}{k_1p(1-m)}\quad \&\quad \underaccent{\bar}{y}=\frac{(r_1(k_1-S)-q_1Ek_1)(1+b(1-m)S)}{k_1p(1-m)}.
\end{equation*}
The governing equation of dynamics on the switching manifold $\Sigma_S$ is given by the equivalent control method introduced by Utkin \cite{utkin1992}. Following this, as $P(z)=0$ on $\Sigma_S$, so $\frac{dP}{dt}=0$ gives
\begin{equation*}
\psi=\frac{r_1\left(1-\frac{S}{k_1}\right)-\frac{p(1-m)Sy}{1+b(1-m)S}}{q_1E}.
\end{equation*}
Hence, by Utkin's method, the dynamics on $\Sigma_S$ can be given by
\begin{equation}\label{phi_y}
\frac{dy}{dt}=r_2y\left(1-\frac{y}{k_2}\right)-q_2y\frac{r_1\left(1-\frac{S}{k_1}\right)-\frac{p(1-m)y}{1+b(1-m)S}}{q_1}=\phi(y).
\end{equation}
\subsection{Equilibria and some special points}
\begin{definition}
Let $z^*$ is an equilibrium point (EP) of \eqref{sys}, then
\begin{enumerate}[(i)]
\item $z^*$ is a regular EP if $F_{S_1}(z^*)=0\;\&\; P(z^*)<0$ or $F_{S_2}(z^*)=0 \;\&\; P(z^*)>0$.
\item $z^*$ is a virtual EP if $F_{S_1}(z^*)=0\;\&\; P(z^*)>0$ or $F_{S_2}(z^*)=0 \;\&\; P(z^*)<0$.
\item $z^*$ is a pseudo-equilibrium point if it is an EP of the sliding mode region, so $\lambda F_{S_1}(z^*)+(1-\lambda) F_{S_2}(z^*)=0$, where $$\lambda = \frac{\langle P_z(z^*),F_{S_2}(z^*)\rangle}{\langle P_z(z^*),F_{S_2}(z^*)-F_{S_1}(z^*)\rangle},$$
\item $z^*$ is a boundary EP if $F_{S_1}(z^*)=0 \;\&\; P(z^*)=0$ or $F_{S_2}(z^*)=0 \;\&\; P(z^*)=0$.
\end{enumerate}
\end{definition}
\begin{definition}
A point $z\in \Sigma_S$ is called a tangent point if $\langle P_z(z^*),F_{S_1}(z^*)\rangle =0$ or $\langle P_z(z^*),F_{S_2}(z^*)\rangle=0$. 
\end{definition}
Now we investigate the existence of such points for system \eqref{sys}, which are regular equilibrium ($E_R$), virtual equilibrium ($E_V$), pseudo-equilibrium ($E_P$), boundary equilibrium ($E_B$) and tangent points($E_T$). The superscript $i$ in $E^i$ represents the vector field $F_{S_i}$ to which the equilibrium point belongs. These are discussed in the following:
\begin{enumerate}[(1).]
\item An equilibrium point $z^*=(x^*,y^*)$ of subsystem-I is called regular ($E_R^1$) if $x^*<S$ and virtual ($E_V^1$) if $x^*>S$. Similarly, an equilibrium $(x^*,y^*)$ of subsystem-II is regular ($E_R^2$) if $x^*>S$ and virtual ($E_V^2$) if $x^*<S$. Whether real or virtual, an equilibrium of each of subsystem-I and II can co-exist.
\item As a pseudo-equilibrium lies on the switching manifold, it is of the form $(S,y_1)$ and it satisfies 
\begin{equation}\label{pseudo1}
\lambda F_{S_1}(S,y_1)+(1-\lambda) F_{S_2}(S,y_1)=0,
\end{equation}
 where $$\lambda=\frac{\langle P_z(S,y_1),F_{S_2}(S,y_1)\rangle}{\langle P_z(S,y_1),F_{S_2}(S,y_1)-F_{S_1}(S,y_1)\rangle}=1-\frac{r_1\left(1-\frac{S}{k_1}\right)-\frac{p(1-m)Sy_1}{1+b(1-m)S}}{q_1E}.$$ 
Putting the value of $\lambda$ in \eqref{pseudo1}, we obtain $$y_1=\frac{q_2k_2r_1(k_1-S)(1+b(1-m)S)-q_1k_1k_2\{r_2+(r_2b+ep)(1-m)S\}}{q_2k_1k_2p(1-m)-q_1k_1r_2(1+b(1-m)S}.$$
So, $E_P=(S,y_1)$ is the pseudo-equilibrium point.
\item When an equilibrium point of either of the subsystems lie on the switching manifold, it is called a boundary equilibrium point. Hence, {\color{blue}it} lies on $x=S$ and satisfies $F_{S_1}(z)=0$ or $F_{S_2}(z)=0$. Eliminating $y$ from both equations of $F_{S_1}(S,y)=0$, we can show that $E_B^1=(S,\bar{y})$ is a boundary point, provided
\begin{equation*}
\frac{r_2}{k_2}\left(k_2- \frac{r_1(k_1-S)(1+b(1-m)S)}{k_1p(1-m)}\right)+\frac{ep(1-m)S}{1+b(1-m)S}=0.
\end{equation*}
Similarly, by eliminating $y$ from both equations of $F_{S_2}(S,y)=0$, we can show that $E_B^2=(S,\underaccent{\bar}{y})$ is a boundary point, provided
\begin{equation*}
\frac{r_2}{k_2}\left(k_2-\frac{\{r_1(k_1-S)-q_1ES\}(1+b(1-m)S)}{k_1p(1-m)}\right)+\frac{ep(1-m)S}{1+b(1-m)S}-q_2E=0.
\end{equation*}
\item Tangent points are where the vector field is perpendicular to the switching manifold. So, these points lie on $\Sigma$ and satisfy one of the conditions $\langle P_z(z^*),F_{S_1}(z^*)\rangle =0$ and $\langle P_z(z^*),F_{S_2}(z^*)\rangle=0$. Hence, it is obtained that $E_T^1=(S,\bar{y})$ and $E_T^2=(S,\underaccent{\bar}{y})$ are two tangent points.

A tangent point $E_T^1$ is visible (invisible) if the orbit $\dot{z}=F_{S_1}(z)$ starting at the tangent point belongs to $S_1$ ($S_2$) for sufficiently small $|t|\neq 0$ \cite{kuznetsov2003}. A similar definition applies for $E_T^2$ in vector field $F_{S_2}$.
\end{enumerate}
\subsection{Stability of the equilibrium points}
The stability of the equilibrium points $E^1$ of $F_{S_1}$ and $E^2$ of $F_{S_2}$ depend on the respective stability criteria derived for corresponding subsystems as well as on the threshold value $(S)$. The stability of the pseudo-equilibrium state $E_P$ depends on the governing equation of the sliding mode dynamics. This suggests that $E_P$ is stable if $\phi'(E_P)<0$ and unstable if $\phi'(E_P)>0$, where $\phi$ is defined in \eqref{phi_y}.
\begin{theorem}\label{thm_erep_main}
The regular equilibrium points $E_R^1$ and $E_R^2$ can co-exist. Furthermore, the pseudo-equilibrium point $E_P$ can exist along with $E_R^1$ and $E_R^2$. Moreover, $E_P$ is unstable when $E_R^1$ and $E_R^2$ both satisfy the condition in Theorem \ref{thm_global}.
\end{theorem}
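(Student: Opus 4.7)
The plan is to decompose the theorem into its three assertions and address them sequentially, exploiting the explicit machinery already built for the interior equilibria and the sliding equation \eqref{phi_y}.

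For the first claim, coexistence of $E_R^1$ and $E_R^2$, I would apply Theorem \ref{thm_eq_exist} twice (once with $\psi=0$, once with $\psi=1$) to produce interior equilibria $(x_i^*,y_i^*)$ of $F_{S_i}$, $i=1,2$. Because the coefficients $\alpha_0,\alpha_1,\alpha_2$ of the defining cubic \eqref{xroot} depend on $\psi E$, the two roots $x_1^*$ and $x_2^*$ differ generically; implicit differentiation of \eqref{xroot} in $E$ exhibits a parameter regime in which $x_1^* < x_2^*$ (predator harvesting releases predation pressure on the prey). Choosing $S$ strictly between these two values then realises $(x_1^*,y_1^*)$ as a regular EP of $F_{S_1}$ in $S_1$ and $(x_2^*,y_2^*)$ as a regular EP of $F_{S_2}$ in $S_2$. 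For coexistence with $E_P=(S,y_1)$, I would use the explicit expression for $y_1$ derived just after \eqref{pseudo1} and verify $\underaccent{\bar}{y} < y_1 < \bar{y}$, which is exactly the defining condition for $E_P$ to lie on $\Sigma_S$; this amounts to two algebraic inequalities in $S, q_1, q_2, E$ that are compatible with the regime chosen above.

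For the instability claim, I would compute $\phi'(y_1)$ directly from \eqref{phi_y}. Writing $\phi(y)=y\,g(y)$ with $g$ affine in $y$ after substituting the equivalent control, the identity $g(y_1)=0$ collapses the derivative to
$$\phi'(y_1) \;=\; y_1\,g'(y_1) \;=\; y_1\!\left(\frac{q_2\,p(1-m)}{q_1(1+b(1-m)S)} - \frac{r_2}{k_2}\right),$$
so $\phi'(y_1)>0$ is equivalent to the algebraic inequality $q_2\,p(1-m)k_2 > q_1\,r_2\bigl(1+b(1-m)S\bigr)$. The main obstacle is to extract this inequality from the two global-stability hypotheses $r_1/k_1 > bp(1-m)^2 y_i^*/(1+b(1-m)x_i^*)$ for $i=1,2$, together with the location $x_1^* < S < x_2^*$. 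My plan is to rewrite these hypotheses via the equilibrium relations \eqref{sys_eq} for $(x_i^*,y_i^*)$ so as to eliminate $y_i^*$, then compare the resulting bounds at the endpoints $x=x_1^*$ and $x=x_2^*$ and pass to $x=S$ by monotonicity of the functions appearing on both sides. Should this direct algebraic route prove intractable, a contingency is to argue by contradiction using a Lyapunov-type functional on the attracting sliding region; however, since the $V$ of Theorem \ref{thm_global} is tailored to a single subsystem and no common Lyapunov function for $F_{S_1}$ and $F_{S_2}$ is at hand, this fallback is delicate and I would pursue it only if the algebraic comparison fails.
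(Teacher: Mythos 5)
Your first two claims are handled essentially as in the paper, which is even terser: regularity by definition forces $x_1^*<S<x_2^*$, and $E_P=(S,y_1)$ coexists with them exactly when $\underaccent{\bar}{y}<y_1<\bar{y}$. The substantive difference, and the problem, is the instability step. Your computation $\phi'(y_1)=y_1\bigl(\tfrac{q_2p(1-m)}{q_1(1+b(1-m)S)}-\tfrac{r_2}{k_2}\bigr)$ is correct (and is unaffected by whether the common gain term $ep(1-m)Sy/(1+b(1-m)S)$ is carried along in $\phi$, since it only shifts the affine factor $g$ by a constant). But the plan to extract the resulting sign condition $q_2p(1-m)k_2>q_1r_2(1+b(1-m)S)$ from the two global-stability inequalities of Theorem \ref{thm_global} is the step that would fail: those inequalities, $r_1/k_1>bp(1-m)^2y_i^*/(1+b(1-m)x_i^*)$, involve $q_1,q_2$ only implicitly through $(x_2^*,y_2^*)$ and place no direct control on the ratio $q_2/q_1$ that your criterion requires; no amount of eliminating $y_i^*$ via \eqref{sys_eq} and comparing at $x_1^*$, $S$, $x_2^*$ will manufacture that ratio. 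Your acknowledged fallback (a common Lyapunov-type argument on the sliding region) is indeed not viable as sketched, so the crucial assertion of the theorem is left open in your proposal.

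The missing idea is that the sign of $\phi'$ at $E_P$ comes from the geometry of regularity and uniqueness, evaluated at the two tangent points, not from the Lyapunov conditions. Write $\phi(y)=y\,g(y)$ with $g$ affine. At the endpoints of $\Sigma_S$ one has $\lambda=1$ and $\lambda=0$, so $\phi(\bar{y})$ is the $y$-component of $F_{S_1}$ at $E_T^1=(S,\bar{y})$ and $\phi(\underaccent{\bar}{y})$ is that of $F_{S_2}$ at $E_T^2=(S,\underaccent{\bar}{y})$. Since $E_T^1$ lies on the prey nullcline of $F_{S_1}$, $x_1^*<S<k_1$ (the latter because $\bar{y}>0$), and the interior equilibrium of subsystem-I is unique, the prey nullcline lies below the predator nullcline at $x=S$, so $\phi(\bar{y})>0$; symmetrically, $S<x_2^*$ and uniqueness for subsystem-II give $\phi(\underaccent{\bar}{y})<0$. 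An affine $g$ with $g(\underaccent{\bar}{y})<0<g(\bar{y})$ has positive slope, which is precisely your inequality, and then $\phi'(y_1)=y_1g'(y_1)>0$, i.e. $E_P$ is unstable (and it automatically lies in $\Sigma_S$). For comparison, the paper does not compute $\phi'$ at all: its proof argues qualitatively that global asymptotic stability of $E_R^1$ and $E_R^2$ in their respective regions carries the flow near $\Sigma_S$ away to those equilibria, whence $E_P$ is unstable. So your $\phi'$-based route is a genuinely different and potentially sharper argument than the paper's, but as proposed it stalls exactly at the step you flagged; replace the attempted derivation from Theorem \ref{thm_global} by the tangent-point/nullcline sign argument above.
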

\begin{proof}
Let $E^1=(x^*_1,y^*_1)$ and $E^2=(x^*_2,y^*_2)$. Since, $E^1$ and $E^2$ are the equilibria of $F_{S_1}$ and $F_{S_2}$ respectively, so we must have $x_1^*<S<x_2^*$ for the equilibria to be regular. Then $E^1=E_R^1$ and $E^2=E_R^2$. Thus $E_R^1$ and $E_R^2$ can co-exist. Also, the pseudo-equilibrium $E_P(S,y_1)$ exists only when $\underaccent{\bar}{y}<y_1<\bar{y}$. Thus these three equilibria can co-exist. Moreover, since $E_R^1$ and $E_R^2$ are globally stable in their respective subsystems, then solutions originating in $S_1$ will converge to $E_R^1$ and the same in $S_2$ will converge to $E_R^2$. Therefore in some neighbourhood of the sliding mode $\Sigma_S$, the direction of the vector field is outwards from $\Sigma_S$. Thus $E_P$ is unstable.
\end{proof}
\section{Numerical analysis}\label{secnum}
In this section, we present some bifurcation analysis of the Filippov system in \eqref{sys} with respect to several key parameters of the system.
\subsection{Bifurcation set of equilibria}
Based on the above analysis of the Filippov system in \eqref{sys} and its existence of sliding mode dynamics, we observed that the threshold values $S$ plays the key role in determining the complex nature of the special points. So it is necessary to investigate the impact of $S$ on the behaviour of the equilibria and sliding dynamics. Therefore, we present the bifurcation set of equilibria for two key parameters of the system which are the threshold value $(S)$ and the predation rate $(p)$. Two instances are presented here with different types of dynamical outputs driven by two different sets of parameters.

$A_1 = \{(r_1,k_1,m,p,b,q_1,E,r_2,k_2,e,q_2)$ : $r_1 = 0.9$, $k_1=2$, $m=0.2$, $p=0.6$, $b=0.4$, $q_1=0.2$, $E=1$, $r_2=0.8$, $k_2=1.5$, $e=0.6$ \& $q_2=0.1\}$

$A_2 = \{(r_1,k_1,m,p,b,q_1,E,r_2,k_2,e,q_2)$ : $r_1=2.3$, $k_1=9$, $m=0.15$, $p=0.2$, $b=0.04$, $q_1=0.1$, $E=1$, $r_2=1.2$, $k_2=7$, $e=0.5$ \& $q_2=0.2$

Equation \eqref{xroot} is used to obtain the four curves that are necessary to analyze the equilibria. $L_1^i$'s ($i=1,2,3,4$) are the four curves for the first set of parameters and $L_2^i$'s ($i=1,2,3,4$) are the corresponding curves for the second set of parameters. $L_1^1$ and $L_2^1$ represent the maximum possible $p$ values, for the corresponding parameter sets, for which the equilibrium point $E^1$ exists, whereas $L_1^2$ and $L_2^2$ denote that of $E^2$. Also, $L_1^3$ and $L_2^3$ represent the unique positive root of equation \eqref{xroot} as a function of $p$ (denoted as $f_1^1(p)$ \& $f_2^1(p)$), for the corresponding parameter sets, for $\psi=0$, whereas $L_1^4$ and $L_2^4$ stand for the unique positive root of equation \eqref{xroot} for $\psi=1$.\\
\begin{minipage}{0.5\textwidth}
\begin{eqnarray*}
L_1^1 &=& \{(S,p): p=0.75\},\\
L_1^2 &=& \{(S,p): p=0.6667\},\\
L_1^3 &=& \{(S,p): S=f_1^1(p)\},\\
L_1^4 &=& \{(S,p): S=f_1^2(p)\},
\end{eqnarray*}
\end{minipage}
\begin{minipage}{0.5\textwidth}
\begin{eqnarray*}
L_2^1 &=& \{(S,p): p=0.38655\},\\
L_2^2 &=& \{(S,p): p=0.4437\},\\
L_2^3 &=& \{(S,p): S=f_2^1(p)\},\\
L_2^4 &=& \{(S,p): S=f_2^2(p)\},
\end{eqnarray*}
\end{minipage}

\begin{figure}[H]
 \subfloat[\label{sp_bif}]{%
  \includegraphics[width=0.5\textwidth, height=6cm]{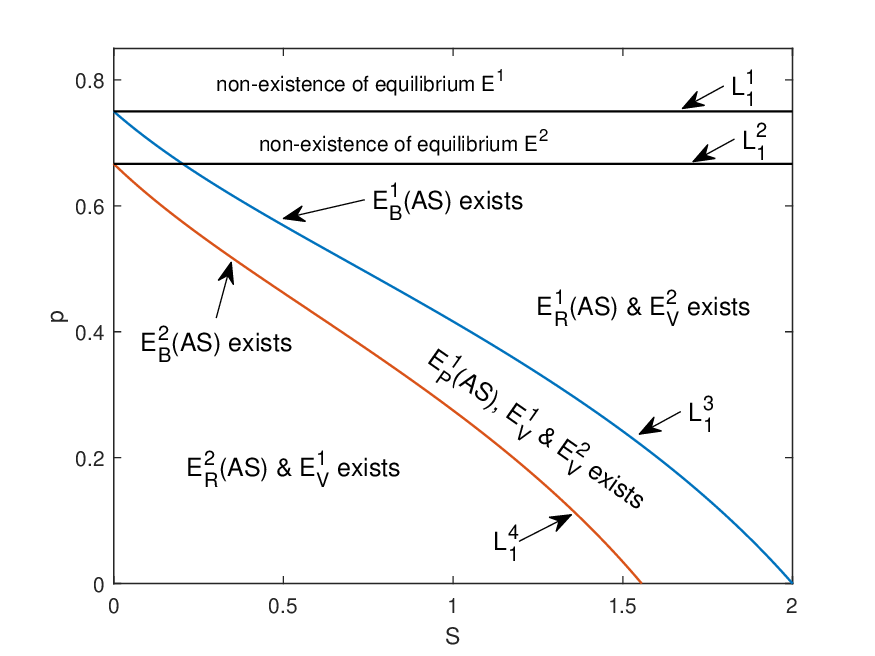}}
 \subfloat[\label{sp_bif1}]{%
  \includegraphics[width=0.5\textwidth, height=6cm]{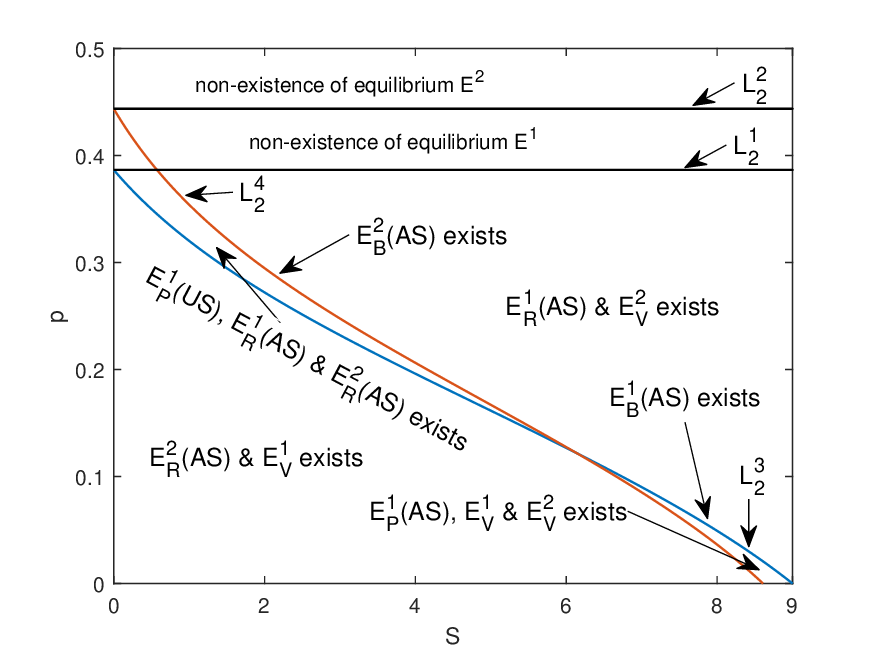}}
\caption{Bifurcation diagram of system \eqref{sys} with respect to the predation rate $p$ and the threshold $S$ showing existence of different type of equilibria with their stability properties like asymptotically stable (AS) or unstable (US): {\bf (a)} For parameters in set A$_1$ with varying $S$ \& $p$ {\bf (b)} For parameters in set A$_2$ with varying $S$ \& $p$.}
\label{S_vs_p_bif}
\end{figure}

Here, an interesting case arises when $S$ and $p$ values lie inside the region bounded below by $L_2^3$ and bounded above by $L_2^1$ and $L_2^4$ in Fig. \ref{sp_bif1}. In such cases both harvesting and non-harvesting positive equilibrium become regular and are stable. Also, these regular equilibria co-exist with a pseudo-equilibrium point which becomes unstable. Figure \ref{basins} shows the sliding mode along with the basins of attraction of the two regular equilibria. $E_T^1$ and $E_T^2$ both become visible tangent points. The pseudo-equilibrium lies on the separatrix that divides both basins of attractions.
\begin{figure}[H]
 \subfloat[Time series\label{ts_eq1}]{%
  \includegraphics[width=0.3\textwidth, height=4.5cm]{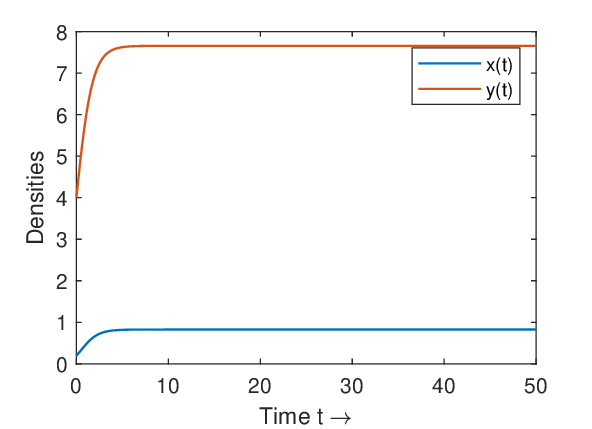}}
 \subfloat[Basins of attraction\label{basins}]{%
  \includegraphics[width=0.4\textwidth, height=4.5cm]{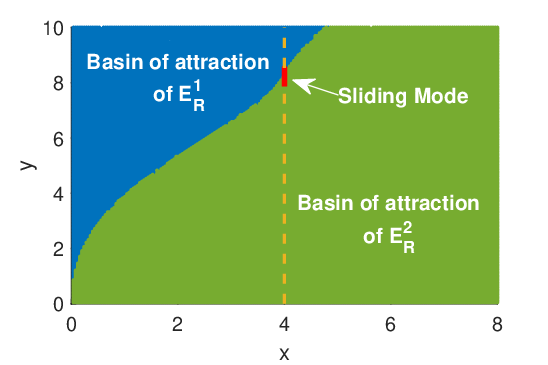}}
 \subfloat[Time series\label{ts_eq2}]{%
  \includegraphics[width=0.3\textwidth, height=4.5cm]{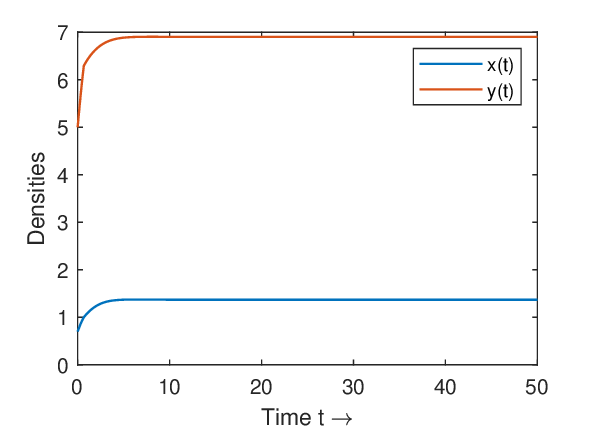}}
  \caption{Dynamics of \eqref{sys} for parameters in set A$_2$ with $S=4$ {\bf (a)} Time series starting from (0.2,4) reaches $E_R^1$ {\bf (b)} Basins of attraction of the two regular equilibria {\bf (c)} Time series starting from (0.8,5) reaches $E_R^2$.}
\label{basins}
\end{figure}
From Fig. \ref{basins}, it can be observed  that system \eqref{sys} can show multi-stability, i.e., the system possesses two stable equilibrium states simultaneously. In this case, the dynamics of certain population will depend on the position of the initial population densities relative to the basins of attraction of the equilibria $E_R^1$ and $E_R^2$.
\subsection{Impact of prey refuge}
In this subsection, we performed numerical analysis to observe the impact of the prey refuge parameter $m$ on the behaviour of the system in \eqref{sys}. We need to vary the value of $m$ and record the changes in the system dynamics. We have obtained bifurcation diagrams for varying $S$ and $p$ with different $m$-values which consists three important key aspects of the system (see Fig. \ref{bif_m}).
\begin{figure}[H]
 \subfloat[$m=0.4$\label{bif_m:1}]{%
  \includegraphics[width=0.5\textwidth, height=6cm]{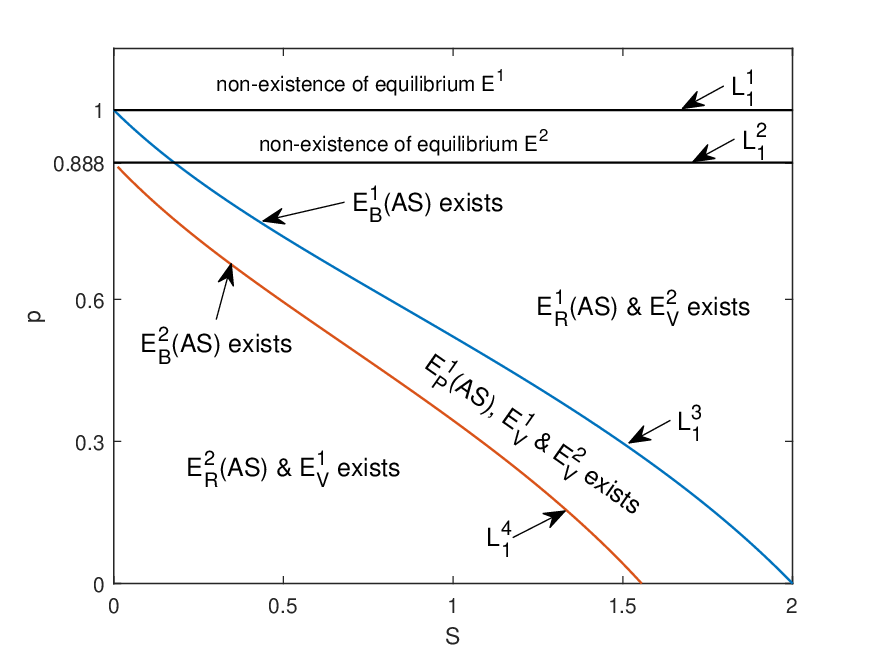}}
 \subfloat[$m=0.8$\label{bif_m:2}]{%
  \includegraphics[width=0.5\textwidth, height=6cm]{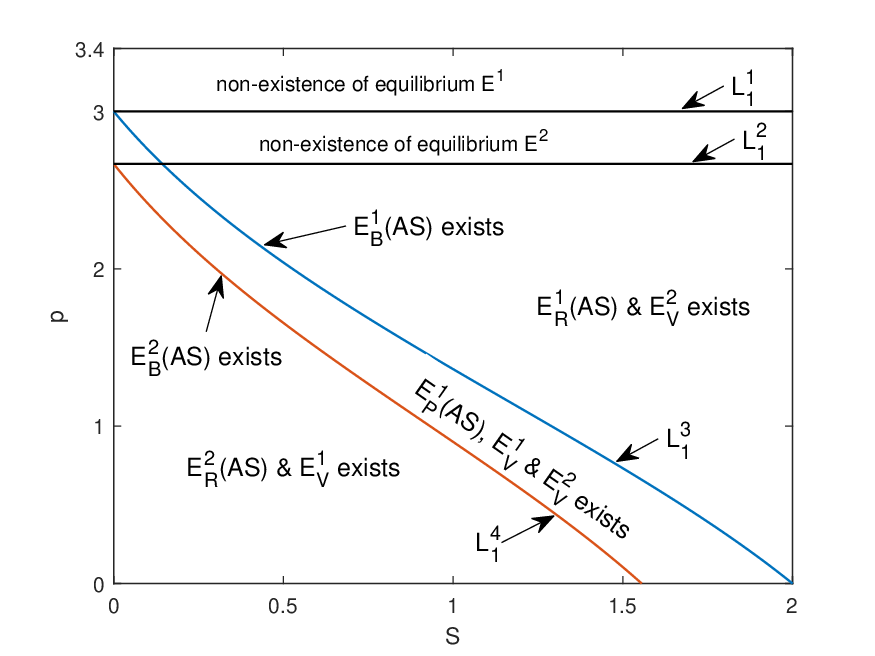}}\\
 \subfloat[$m=0.9$\label{bif_m:3}]{%
  \includegraphics[width=0.5\textwidth, height=6cm]{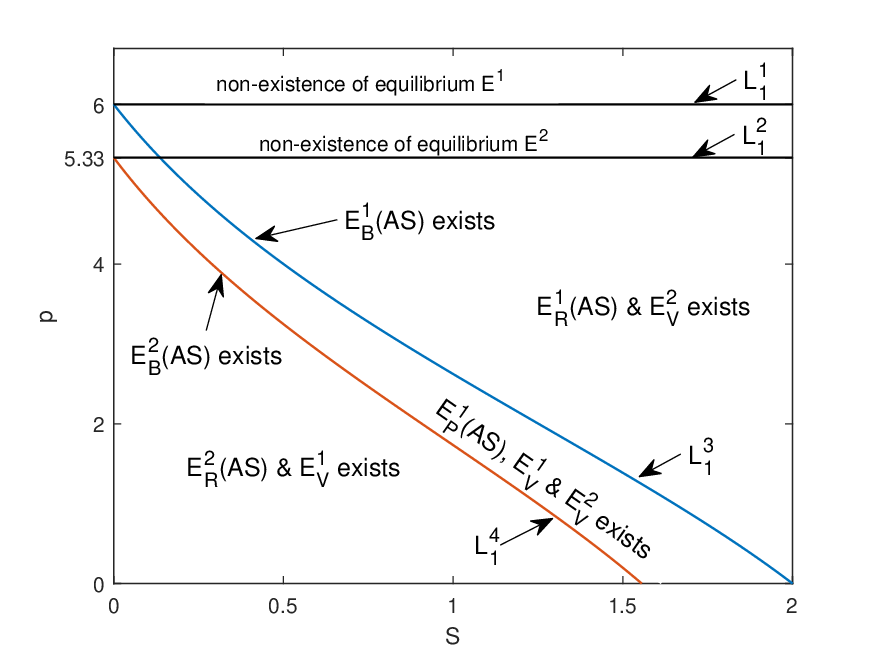}}
\caption{Bifurcation diagrams in $Sp$-plane for different $m$-values with other parameters from set A$_1$}
\label{bif_m}
\end{figure}

From Fig. \ref{bif_m}, it is apparent that increasing the value of $m$ results in the existence of the positive equilibrium points $E^1_R$ and $E^2_R$ for larger values of $p$. This observation suggests that higher values of $m$ correspond to expanded basins of attraction for the interior equilibrium points in $S-p$ parameter space.
\subsection{Sliding Bifurcation}
Here, we investigate the existence of sliding bifurcation for system \eqref{sys} by varying the threshold value for harvesting while all other parameters constant.

{\em Boundary focus bifurcation:} 
This is a type of local sliding bifurcation which may occur when a regular equilibrium point and a pseudo-equilibrium point collide together at a tangent point once the parameter $S$ crosses a threshold value \cite{tan2016}. Let us fix the values of the system parameters, except $S$, as follows: $r_1=0.9$, $k_1=2$, $p=0.6$, $m=0.2$, $b=0.4$, $q_1=0.2$, $E=1$, $r_2=0.8$, $k_2=1.5$, $e=0.6$ \& $q_2=0.1$. Now, numerically it can be obtained that $E^1(0.4005,1.6917))$ is an equilibrium point of vector field $F_{S_1}$ and $E^2(0.1416,1.3856)$ is that of $F_{S_2}$. These equilibrium points can be further termed as regular $(E_R)$, virtual $(E_V)$ or boundary $(E_B)$ as per their definitions.

In Fig. \ref{bif_focus} and \ref{bif_node}, the the vertical dotted line represents the switching manifold $P(z)=0$ or $x=S$, and the red line segment represents the sliding region. The green dot denotes that the equilibrium is locally asymptotically stable, the black dot denotes that the equilibrium is irrelevant for the dynamics of the system whereas the tangent points are shown by red dots.

For $S<0.1416$, there exists one regular equilibrium point $E_R^2$ and one virtual equilibrium point $E_V^1$ (see Fig. \ref{s0p1}). The pseudo-equilibrium point does not exists since $E_P$ lies outside the sliding region $\underaccent{\bar}{y}<y<\bar{y}$. $E_T^1$ becomes an invisible tangent point while $E_T^2$ is a visible tangent point. This suggests that when critical prey population density is small enough, system \eqref{sys} can attain the harvesting equilibrium in the long-run. This takes care of the harvesting and sustainability of the populations simultaneously. 

For $S=0.1416$, the regular equilibrium $E_R^2$, the pseudo-equilibrium $E_P$ and the tangent point $E_T^2$ collide and form the boundary equilibrium point $E_B^2$ which becomes a stable equilibrium state. Hence, $S=0.1416$ becomes the bifurcation point for boundary focus bifurcation. The scenario is depicted in Fig. \ref{s0p141}. 

For $0.1416<S<0.4005$, the two equilibrium points of the two vector fields become virtual equilibrium states $E_V^1$ \& $E_V^2$ (see Fig. \ref{s0p25}). Both tangent points become invisible and the sliding mode region $\underaccent{\bar}{y} <y<\bar{y}$ contains the stable pseudo-equilibrium $E_P$. Since the dynamics on the switching manifold is a convex combination of the harvesting and non-harvesting subsystem, so some level of harvesting still continues when the dynamics reaches $E_P$.
\begin{figure}[H]
 \subfloat[For $S=0.1$, $E_R^2$ becomes asymptotically stable\label{s0p1}]{%
  \includegraphics[width=0.5\textwidth, height=6cm]{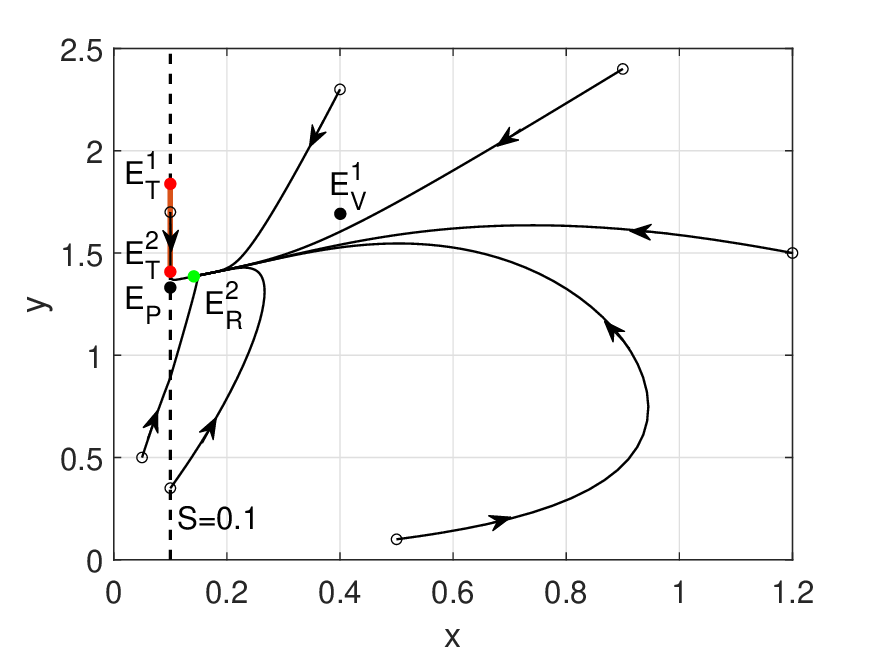}}
 \subfloat[For $S=0.1416$, a stable equilibrium $E_B^2$ emerges\label{s0p141}]{%
  \includegraphics[width=0.5\textwidth, height=6cm]{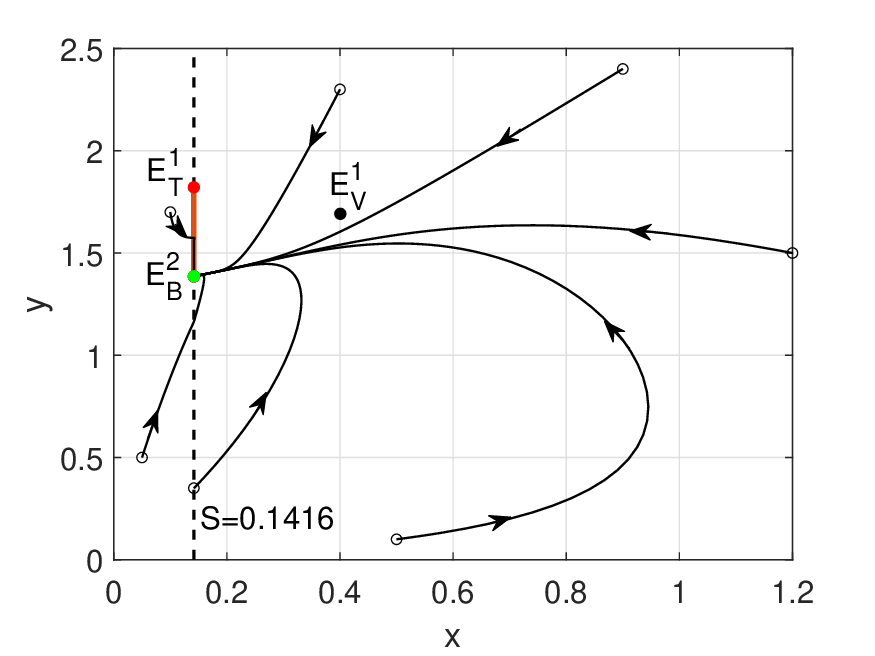}}\\
 \subfloat[For $S=0.25$, the pseudo-equilibrium $E_P$ becomes asymptotically stable\label{s0p25}]{%
  \includegraphics[width=0.5\textwidth, height=6cm]{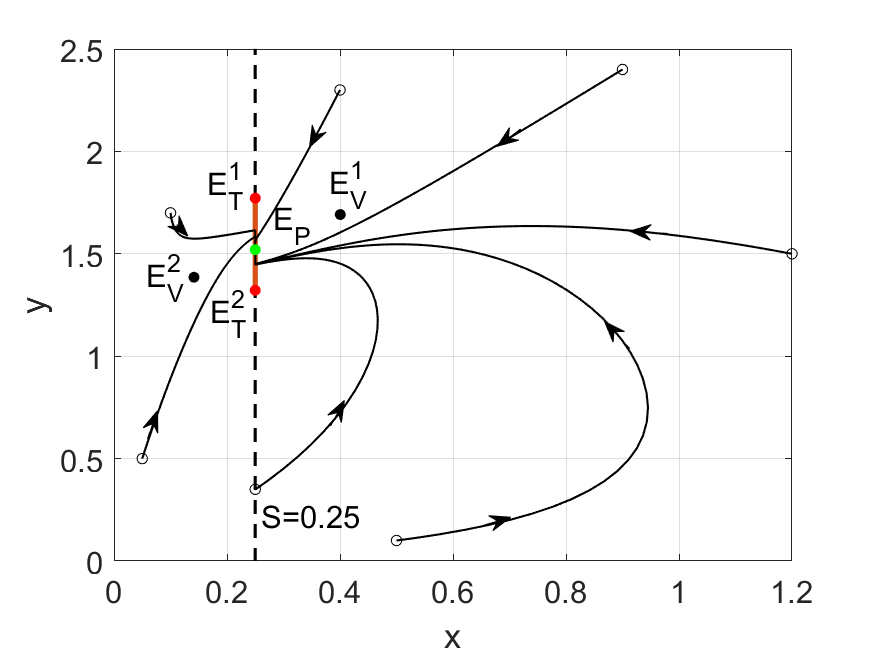}}
\caption{System \eqref{sys} undergoes boundary focus bifurcation at $S=0.1416$ when $r_1=0.9$, $k_1=2$, $p=0.6$, $m=0.2$, $b=0.4$, $q_1=0.2$, $E=1$, $r_2=0.8$, $k_2=1.5$, $e=0.6$ \& $q_2=0.1$}
\label{bif_focus}
\end{figure}

{\em Boundary node bifurcation:} 
For $S=0.4005$, the virtual equilibrium point $E_V^1$, the tangent point $E_1^2$ and the pseudo-equilibrium point $E_P$ collide to form the boundary equilibrium point $E_B^1$ (see Fig. \ref{s0p4005}). Thus boundary node bifurcation occurs. Therefore, $S=0.4005$ becomes the bifurcation point. {\color{blue}}

For $S>0.4005$, the regular equilibrium point of vector field $F_{S_1}$ becomes stable whereas the only equilibrium point of $F_{S_2}$ becomes virtual (see Fig. \ref{s0p6}). The pseudo-equilibrium point $E_P$ does not exist and the tangent point $E_T^1$ becomes visible while $E_T^2$ is invisible. This implies that system \eqref{sys} will reach the non-harvesting equilibrium eventually. So higher critical prey threshold value leads to a situation in the long-term where harvesting is not implemented at all.
\begin{figure}[H]
 \subfloat[For $S=0.4005$, a stable equilibrium $E_B^1$ emerges \label{s0p4005}]{%
  \includegraphics[width=0.5\textwidth, height=6cm]{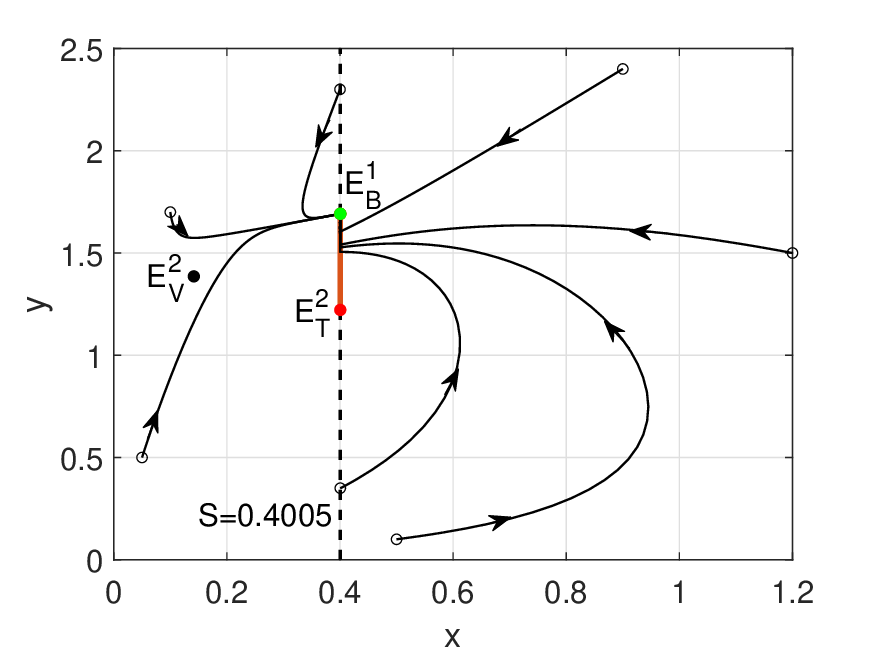}}
 \subfloat[For $S=0.7$, $E_R^1$ becomes asymptotically stable\label{s0p6}]{%
  \includegraphics[width=0.5\textwidth, height=6cm]{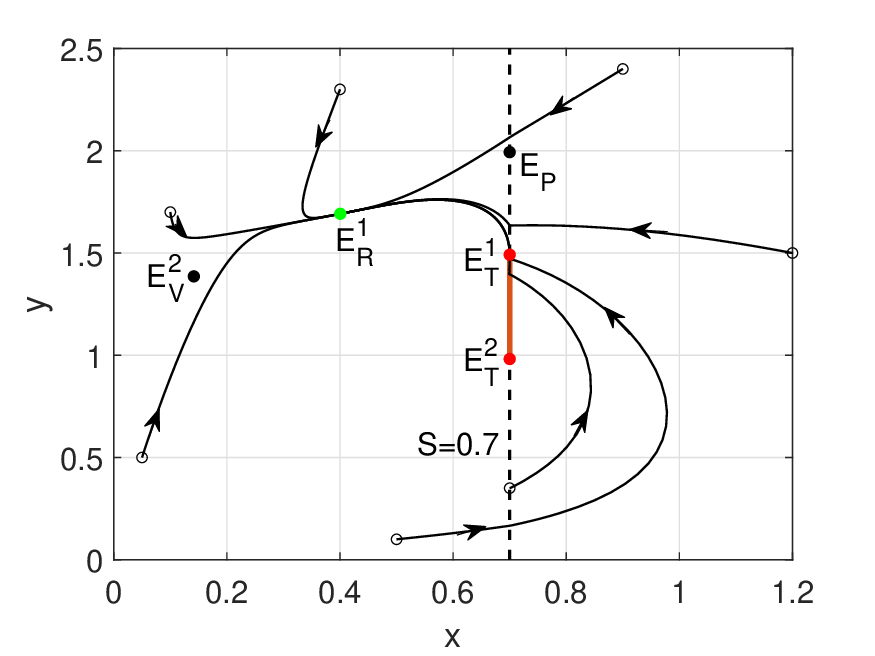}}
\caption{System \eqref{sys} undergoes boundary node bifurcation at $S=0.4005$ when $r_1=0.9$, $k_1=2$, $p=0.6$, $m=0.2$, $b=0.4$, $q_1=0.2$, $E=1$, $r_2=0.8$, $k_2=1.5$, $e=0.6$ \& $q_2=0.1$}
\label{bif_node}
\end{figure}
\section{Conclusion and Discussion}\label{seccon}
Here, a Filippov (non-smooth) system has been used to study the predator-prey interactions. 
The model maybe relevant for examining the dynamics of two fish populations, wherein the prey species represents a herbivorous fish population that has access to refuge (e.g.: some reef-dwelling fish species, Peruvian anchovy etc.) and the predator species is a generalist (e.g.: Lionfish, common dolphinfish, etc.) that preys on various small fishes. 
The Filippov system effectively regulates the harvesting using a discontinuous harvesting strategy which depends on the density of the prey population. This helps maintain the density at a certain threshold level. As pointed out in \cite{filippov1988}, the method of study for these systems are different from usual smooth systems. The domain and dynamics of the sliding region are obtained using Utkins's equivalent control method \cite{utkin1992}. The sliding mode dynamics and several types of equilibria, for e.g. regular, virtual and pseudo equilibria can provide new insights about the systems.

The threshold parameter plays the key role in deciding the equilibrium points which can contribute to the population dynamics. While the pseudo-equilibrium present in the sliding region and the regular equilibria can affect the local and global dynamics, the pseudo-equilibrium present outside the sliding region and the virtual equilibria are like any other ordinary points in the phase portrait. As the threshold level varies, the population can either settle at one of the regular equilibrium points or stabilize at a pseudo-equilibrium.

It is necessary to study the subsystems in \eqref{subsys1} and \eqref{subsys2} to study the Filippov system. Therefore, the existence of axial and interior equilibria is demonstrated, along with the derivation of uniqueness criteria for their respective interior equilibrium. Furthermore, the local and global stability criteria for their corresponding equilibrium are obtained. The solution of the Filippov system are shown to be bounded in the positive quadrant. For the numerical analysis of system \eqref{sys}, we have considered two different sets of parameter values  which are set A$_1$ and A$_2$. The bifurcation set of equilibria are obtained for these two sets of parameters and they show very different type of dynamical behaviour when the $S-p$ parameter space is explored (see Fig. \ref{S_vs_p_bif}). We observed that, the pseudo-equilibrium is stable when both positive equilibria are virtual. As stated in Theorem \ref{thm_erep_main}, Fig. \ref{sp_bif1} clearly shows that when both regular equilibria can co-exist and become stable, the pseudo-equilibrium becomes unstable. Therefore, whether the populations will attain the non-harvesting equilibrium $(E_R^1)$ or the harvesting equilibrium $(E_R^2)$ in the long-run depends on the initial conditions. Thus certain initial densities can be chosen to reach the harvesting equilibrium so that harvesting can be implemented while maintaining certain threshold density.

For the parameters in set A$_1$, we observed that the system undergoes some sliding bifurcations like boundary focus and boundary node bifurcation. We observed that, for lower threshold parameter values $(S\leq 0.1416)$ the harvesting equilibrium is stable and for higher threshold values $(S\geq 0.4005)$, the non-harvesting equilibrium becomes stable. The system dynamics stabilizes at the pseudo-equilibrium for medium threshold values $(0.1416<S<0.4005)$ that is some level of harvesting will continue to occur since the equilibrium is a convex combination of both harvesting and non-harvesting equilibria. So we can conclude that by selecting a suitable threshold value (critical prey population density), we can bring the populations to the harvesting equilibrium, ensuring that harvesting can continue while maintaining critical population densities. Also, observing the bifurcation diagrams for different $m$-values (see Fig. \ref{bif_m}), it is noticed that the basin of attraction for the regular equilibrium points in the $S-p$ parameter space increases as the $m$-values increase.
%

\section*{Funding}
Not applicable.
\section*{Conflict of interest/Competing interests}
The authors declare that they have no conflict of interest.
\section*{Availability of data and material (data transparency)}
Not applicable
\section*{Code availability}
Not applicable.

%
%


\begin{thebibliography}{99}
\bibitem{aziz2002} Aziz-Alaoui, M. A.: Study of a Leslie–Gower-type tritrophic population model. Chaos, Solitons \& Fractals, 14(8), 1275-1293 (2002)

\bibitem{bernardo2008} Bernardo, M., Budd, C., Champneys, A. R., \& Kowalczyk, P. (2008). Piecewise-smooth dynamical systems: theory and applications (Vol. 163). Springer Science \& Business Media.

%

\bibitem{biak2013} Biák, M., Hanus, T., \& Janovská, D. (2013). Some applications of Filippov’s dynamical systems. Journal of Computational and Applied Mathematics, 254, 132-143.

\bibitem{bondarev2022} Bondarev, A., \& Upmann, T. (2022). Sliding modes in the management of renewable resources. Automatica, 144, 110487.

\bibitem{chen2010} Chen, L., \& Chen, F. (2010). Global analysis of a harvested predator–prey model incorporating a constant prey refuge. International Journal of Biomathematics, 3(02), 205-223.

\bibitem{clark1985} Clark, C.W.  (1985). Bioeconomic Modelling and Fisheries Management. Wiley, New York.

\bibitem{dubey2014} Dubey, B., Patra, A., \& Sahani, S. K. (2014). Modelling the dynamics of a renewable resource under harvesting with taxation as a control variable. Applications and Applied Mathematics: An International Journal (AAM), 9(2), 10.

\bibitem{filippov1988} Filippov, A. F. (1988). Differential Equations with Discontinuous Righthand Sides (Vol. 18). Springer Science \& Business Media.

\bibitem{giannakopoulos2002} Giannakopoulos, F., \& Pliete, K. (2002). Closed trajectories in planar relay feedback systems. Dynamical Systems, 17(4), 343-358.

\bibitem{guin2012} Guin, L. N., Haque, M., \& Mandal, P. K. (2012). The spatial patterns through diffusion-driven instability in a predator–prey model. Applied Mathematical Modelling, 36(5), 1825-1841.

\bibitem{gupta2013} Gupta, R. P., \& Chandra, P. (2013). Bifurcation analysis of modified Leslie–Gower predator–prey model with Michaelis–Menten type prey harvesting. Journal of Mathematical Analysis and Applications, 398(1), 278-295.

\bibitem{hassell1974} Hassell, M. P., \& May, R. M. (1973). Stability in insect host-parasite models. The Journal of Animal Ecology, 693-726.

\bibitem{hassell1978} Hassell, M. P. (1978). The Dynamics of Arthopod Predator-Prey Systems. (MPB-13), Volume 13. Princeton University Press. https://doi.org/10.2307/j.ctvx5wb2n

\bibitem{holling1959} Holling, C. S. (1959). Some characteristics of simple types of predation and parasitism. The Canadian Entomologist, 91, 385-398.

\bibitem{ji2010} Ji, L., \& Wu, C. (2010). Qualitative analysis of a predator–prey model with constant-rate prey harvesting incorporating a constant prey refuge. Nonlinear Analysis: Real World Applications, 11(4), 2285-2295.

\bibitem{kar2005} Kar, T. K. (2005). Stability analysis of a prey–predator model incorporating a prey refuge. Communications in Nonlinear Science and Numerical Simulation, 10(6), 681-691.

\bibitem{khamis2011} Khamis, S. A., Tchuenche, J. M., Lukka, M., \& Heiliö, M. (2011). Dynamics of fisheries with prey reserve and harvesting. International Journal of Computer Mathematics, 88(8), 1776-1802.

\bibitem{ko2006} Ko, W., \& Ryu, K. (2006). Qualitative analysis of a predator–prey model with Holling type II functional response incorporating a prey refuge. Journal of Differential Equations, 231(2), 534-550.

\bibitem{kuznetsov2003} Kuznetsov, Y. A., Rinaldi, S., \& Gragnani, A. (2003). One-parameter bifurcations in planar Filippov systems. International Journal of Bifurcation and chaos, 13(08), 2157-2188.

\bibitem{luo2017} Luo, J., \& Zhao, Y. (2017). Stability and bifurcation analysis in a predator–prey system with constant harvesting and prey group defense. International Journal of Bifurcation and Chaos, 27(11), 1750179.

\bibitem{luo2023} Luo, J., \& Zhao, Y. (2023). Bifurcation analysis of a non-smooth prey–predator model by a differential linear complementarity system. Mathematics and Computers in Simulation, 205, 581-599.

\bibitem{lv2013} Lv, Y., Yuan, R., \& Pei, Y. (2013). Dynamics in two nonsmooth predator–prey models with threshold harvesting. Nonlinear Dynamics, 74, 107-132.

\bibitem{majumdar2022} Majumdar, P., Debnath, S., Sarkar, S., \& Ghosh, U. (2022). The complex dynamical behavior of a prey-predator model with holling type-III functional response and non-linear predator harvesting. International Journal of Modelling and Simulation, 42(2), 287-304.

\bibitem{meng2021} Meng, X. Y., \& Li, J. (2021). Dynamical behavior of a delayed prey-predator-scavenger system with fear effect and linear harvesting. International Journal of Biomathematics, 14(04), 2150024.

\bibitem{mondal2020} Mondal, S., \& Samanta, G. P. (2020). Dynamics of a delayed predator–prey interaction incorporating nonlinear prey refuge under the influence of fear effect and additional food. Journal of Physics A: Mathematical and Theoretical, 53(29), 295601.

\bibitem{sen2020} Sen, D., Petrovskii, S., Ghorai, S., \& Banerjee, M. (2020). Rich bifurcation structure of prey–predator model induced by the Allee effect in the growth of generalist predator. International Journal of Bifurcation and Chaos, 30(06), 2050084.

\bibitem{tan2016} Tan, X., Qin, W., Liu, X., Yang, J., \& Jiang, S. (2016). Sliding bifurcation analysis and global dynamics for a Filippov predator-prey system. Journal of Nonlinear Sciences and Applications, 9(06), 3948-3961.

\bibitem{utkin1992} Utkin, V. I. (1992). Sliding Modes In Control And Optimization. Springer Berlin Heidelberg. doi:10.1007/978-3-642-84379-2

\bibitem{wang2006} Wang, L. L., Fan, Y. H., \& Li, W. T. (2006). Multiple bifurcations in a predator–prey system with monotonic functional response. Applied Mathematics and Computation, 172(2), 1103-1120.

\bibitem{wang2014} Wang, A., \& Xiao, Y. (2014). A Filippov system describing media effects on the spread of infectious diseases. Nonlinear Analysis: Hybrid Systems, 11, 84-97.

\bibitem{wang2018} Wang, A., Xiao, Y., \& Zhu, H. (2018). Dynamics of a Filippov epidemic model with limited hospital beds. Mathematical Biosciences \& Engineering, 15(3), 739.

\bibitem{zhao2015} Zhao, H., Huang, X., \& Zhang, X. (2015). Hopf bifurcation and harvesting control of a bioeconomic plankton model with delay and diffusion terms. Physica A: Statistical Mechanics and its Applications, 421, 300-315.

\bibitem{zhou2013} Zhou, Q. (2013). The positive periodic solution for Nicholson-type delay system with linear harvesting terms. Applied Mathematical Modelling, 37(8), 5581-5590.

\bibitem{zhou2022} Zhou, H., \& Tang, S. (2022). Bifurcation dynamics on the sliding vector field of a Filippov ecological system. Applied Mathematics and Computation, 424, 127052.
\end{thebibliography}
\end{document}